\definecolor{Blue}{rgb}{0.3,0.3,0.9}
\definecolor{orange}{rgb}{1,0.5,0}
\newtheorem{theorem}{Theorem}[section]
\newtheorem{lemma}[theorem]{Lemma}
\newtheorem{remark}[theorem]{Remark}
\newcommand{\bi}{\partial} %\notation for boundary index!!!
\def\hii{{h}}
\def\hij{{h}}
\def\Xi{{X}_h}
\begin{document}

%\tableofcontents
\title[Generalized Multiscale FEM. SIPG coupling]
{Generalized Multiscale Finite Element Method. Symmetric Interior Penalty Coupling}

\author{Y. Efendiev} %, J. Galvis, R. Lazarov, M. Moon, M. Sarkis}
\address{Department of Mathematics, Texas A\&M University, College Station, TX 77843, USA}
\email{efendiev@math.tamu.edu}

\author{J. Galvis}
\address{Departamento Matem\'aticas, Universidad Nacional de Colombia,
Carrera 45 No 26-85 - Edificio Uriel Gutierr\'ez, 
Bogot\'a D.C. - Colombia}
\email{jcgalvisa@unal.edu.co}

\author{R. Lazarov} %, M. Moon, M. Sarkis}
\address{Department of Mathematics, Texas A\&M University, College Station, TX 77843, USA}
\email{lazarov@math.tamu.edu}

\author{M. Moon} %, M. Sarkis}
\address{Department of Mathematics, Texas A\&M University, College Station, TX 77843, USA}
\email{mmoon@math.tamu.edu}

\author{M. Sarkis}
\address{Mathematical Sciences Department,
Worcester Polytechnic Institute, 
100 Institute Road, 
Worcester, MA 01609-2280. AND 
Instituto Nacional de Matem\'atica Pura e Aplicada, Estrada Dona Castorina 110, CEP 22460-320, Rio de Janeiro,
Brazil}
\email{msarkis@wpi.edu, msarkis@impa.br }

% will become a 1st page footnote.
%\email{}

%    General info
%\subjclass[]{}

%\date{Started March 15, 2012, today is \today}

%\dedicatory{ }
\keywords{multiscale finite element method, discontinuous Galerkin, snapshot spaces}

%\date{\today}

\maketitle

%%%% Start %%%%%%
%\tableofcontents

\begin{abstract}
 Motivated by applications to numerical simulation of flows in highly heterogeneous porous media,  we develop multiscale finite element methods for second order elliptic equations.
We discuss a multiscale model reduction technique in the framework of the discontinuous  Galerkin finite element method. 
We propose  three different finite element spaces on the coarse mesh. 
 The first space  is based on a local eigenvalue problem that uses a weighted $L_2-$norm for computing the "mass" matrix.  
The second space is generated by amending the eigenvalue problem of the first case with a term related to the penalty. 
The third choice is based on  generation of a large space of snapshots and subsequent selection of a subspace of a reduced dimension.
  The approximation with these spaces is based on the discontinuous  Galerkin finite element method framework. 
We investigate the stability  and derive error estimates for the methods  and further experimentally study their performance on a representative number of numerical examples.
\end{abstract}

\section{Introduction}\label{sec:intro}

In this paper we present a study of numerical methods for the
simulation of flows in highly heterogeneous porous media. 
The media properties are assumed to
contain multiple scales and high contrast. In this case, 
solving the systems arising in the approximation of
 the flow equation on a fine-grid  that resolves all scales  
by the finite element, finite volume, or mixed FEM
could be prohibitively expensive, unless special care is taken for solving the resulting system.
A number of techniques have been proposed to efficiently solve the 
these fine-grid systems.
Among these are  
multigrid  methods (e.g., \cite{Arb_MG,Eb_Wittum_05}),
multilevel methods  (e.g., \cite{Vassilevski_book_08, Vass_Upscal_11}),
 and domain decomposition techniques
(e.g., \cite{ ge09_2, Efendiev_GLW_ESAIM_12, ge09_1, Graham1, hw97,  tw}).

More recently, a new large class of accurate reduced-order methods
has been introduced and used in various applications. These include 
Galerkin multiscale finite elements 
(e.g., \cite{Arbogast_two_scale_04, Chu_Hou_MathComp_10,ee03,egw10,eh09,ehg04}),
mixed multiscale finite element methods 
(e.g., \cite{aarnes04, ae07, Arbogast_Boyd_06,Iliev_MMS_11}), 
the multiscale finite volume method (see, e.g., 
\cite{jennylt03}), 
mortar multiscale methods (see e.g., \cite{Arbogast_PWY_07, Wheeler_mortar_MS_12}),  
and variational multiscale methods
(see e.g., \cite{hughes98}).
Our main goal is to extend these concepts and develop 
a systematic methodology for solving complex multiscale problems with
high-contrast and no-scale separation by using discontinuous basis functions.

In this paper, we study the 
 multiscale model reduction techniques within discontinuous Galerkin framework.
As the  problem is expected to be solved for many input parameters such
as source terms, boundary conditions, and spatial heterogeneities, 
we divide the computation into two stages 
(following known formalism \cite{ boyaval, maday}): 
offline and online, where  our goal in the offline stage is
to construct a reduced dimensional multiscale space to be used for rapid 
computations in the online stage.
In the offline stage \cite{egt11}, we generate a snapshot space 
and propose a local spectral problem that allows selecting dominant modes in the
space of snapshots. 
In the online stage use the basis functions 
computed offline to solve the problem for current realization of the parameters (a further spectral 
selection may be done in the online step in each coarse block).  As a result,
the basis functions generated by coarse block computations are discontinuous
along the coarse-grid inter-element faces/edges.
Previously, e.g. \cite{egt11}, in order to generate 
conforming basis functions, partition of unity functions have been used. However, this procedure modifies original spectral
basis functions and is found to be difficult to apply for more complex flow problems. 
In this paper, we propose and explore the use of local model reduction
techniques within the framework of the 
discontinuous Galerkin finite element methods.

We introduce a Symmetric Interior Penalty Discontinuous Galerkin (SIPG) method that uses 
spectral basis functions that are constructed in special way in order 
to reduce the degrees of freedom of the local (coarse-grid) approximation spaces. 
Also we discuss the use of penalty parameter in the SIPG method  and derive a stability
result for a penalty that scales as the inverse of the fine-scale
mesh. We show that the stability constant is 
independent of the contrast. The latter is important as the problems
under consideration have high contrast.

We also derive error estimates and  discuss the 
convergence issues of the method. Additionally, 
the efficacy of the proposed methods is demonstrated on 
a set of numerical experiments with
flows in high-contrast media where the permeability
field has subregions of high conductivity, which form channels and islands. In both 
cases we observe that as  the dimension of the coarse-grid space increases, 
the error decreases and the decrease is proportional to the eigenvalue
that the corresponding eigenvector is not included in the coarse space.
In particular, we present results when the snapshot space consists of local solutions.

The paper is organized in the following way. In Section \ref{sec:prob} we present
our model problem in a weak form and introduce the approximation method that
involves two grids, fine (that resolves all scales of the heterogeneity) and
coarse (where the solution will be sought). On each cell of the coarse mesh we
introduce a lower dimensional space of functions that are defined on the fine mesh.
We also show that the method is stable in a special DG norm.
In Section \ref{sec:spaces} we present three different choices of local spaces.
The first two are based on few eigenfunctions of special  spectral problems
in the style of  \cite{ge09_2,ge09_1}. 
The third choice is based on the concept of snapshots, 
e.g. \cite{ boyaval, maday}.
In Section \ref{sec:numerics}  we present some numerical experiments  and report 
 the error of the Discontinuous Galerkin method with the
constructed coarse-grid spaces and in Section \ref{sec:discuss}, we
discuss the numerical  results.
The theoretical results are derived under the assumption that  the penalty 
stabilization depends on the fine-mesh size.
Based on the numerical experiments we can conclude that the interior penalty Galerkin method
gives reasonable practical results in using coarse-grid spaces generated by special problems, 
solved locally on each coarse-grid block, that take into account the  highly heterogeneous
behavior of the coefficient of the differential equation (in our case, the permeability).

%%%%%%%%%%%%%%%%%%%%%%%%%%%%%%%%%%%%%%%%%%%%%%%%%
%%%%%%%%%%%%%%%%%%%%%%%%%%%%%%%%%%%%%%%%%%%%%%%%%
\section{Continuous and discrete problems}\label{sec:prob}

We consider the following problem: Find $u^* \in H^1_0(\Omega)$ such that
\begin{equation} \label{eq:diff}
a(u^*, v) = f(v) \quad ~~\mbox{ for all } v \in H^1_0(\Omega)
\end{equation}
where
\[
a(u, v) :=  \int_{\Omega} \!\!\! \kappa(x) \nabla u \cdot \nabla v dx
~~\mbox{and} ~~ f(v) := \int_\Omega \!\!f v dx.
\]
Here $\Omega$ is a bounded domain in $R^d$, $d=2,3$ with polygonal boundary. We assume that $f \in L_2(\Omega)$ and
the coefficient $\kappa(x)$ represents the permeability of a highly heterogeneous porous media with high
contrast, that is high ratio between the maximum and minimum values, see Figure \ref{fig:coef-perm2_2}.
%
%\[
%a(u, v) := \sum^N_{i=1}  \int_{i} \!\!\! \kappa_i(x) \nabla u \cdot \nabla v dx
%~~\mbox{and} ~~ f(v) := \int_\Omega \!\!f v dx.
%\]
Our main goal in this paper is to develop an approximation method for 
(\ref{eq:diff})  on a coarse grid using certain ``low energy"
local eigenfunctions.

We consider the two dimensional case. The method and results presented here 
extend for three dimensional case.
We split the domain $\Omega$ into disjoint polygonal subregions $\{\Omega_i\}_{i=1}^N$
of diameter $O(H_i)$ so that  $\overline{\Omega} = \cup^N_{i=1} \overline{\Omega}_i$.
% where  the substructures $\{\Omega_i\}_{i=1}^N$ are disjoint regular polygonal subregions of
%diameter $O(H_i)$.
We assume that  the substructures  $\{\Omega_i\}_{i=1}^N$ form
a geometrically nonconforming partition of $\Omega$.
In this case,  for  $ i \neq j$,  the intersection  $\partial \Omega_i \cap \partial \Omega_j$
is either empty, a vertex of $\Omega_i$ and/or $\Omega_j$, or a common
edge  of $\partial \Omega_i$ and $\partial \Omega_j$.
We recall that in the case of geometrically conforming decomposition, the
intersection $\partial \Omega_i \cap \partial \Omega_j$
is either empty or a common vertex of $\Omega_i$ and $\Omega_j$,
or a common edge of $\Omega_i$ and $\Omega_j$. Similar construction is 
assumed in 3-D with  $\Omega_i$ being polyhedra.

%%%%%%%%%%%%%%%%%%%%%%%%%%%%%%%%%%%%%%%%%%%%%%%%

Further, in each $\Omega_i$ we
introduce a shape regular triangulation $\mathcal{T}_{\hii}(\Omega_i)$
with triangular elements and maximum mesh-size $h_i$. The resulting triangulation
of $\Omega$ is in
general nonmatching across $\partial \Omega_i$. Let
$\Xi(\Omega_i)$ be
the regular finite element  space of piecewise linear and continuous functions in
$\mathcal{T}_{\hii}(\Omega_i)$.
We do not assume that functions in $\Xi(\Omega_i)$  vanish
 on $\partial \Omega_i
 \cap \partial \Omega $. We define
\[
X_h(\Omega) = \Xi(\Omega_1) \times \cdots \times \Xi(\Omega_N)
\]
and represent functions $v$ of  $X_h(\Omega)$ as
$v = \{v_i\}^N_{i=1}$ with $v_i\in \Xi(\Omega_i)$. For simplicity, we also assume that
the permeability $\kappa(x)$ is constant over each fine-grid element.

Due to the fact that $\mathcal{T}_{\hii}(\Omega_i)$ and
$\mathcal{T}_{\hij}(\Omega_j)$ are independent from each
other on a common edge $E=\partial \Omega_i \cap \partial \Omega_j$
they may introduce two different partitions of $E$ which are merged to obtain 
a set of faces $E_{ij} \subset E$. Since the functions in $X_h(\Omega)$ are 
discontinuous along the interfaces, it is necessary to distinguish between
$E\subset \overline{\Omega}_i$ and
$E\subset\overline{\Omega}_j$.   From now on 
the $\Omega_i$-side of $E$ will be denoted by $E_{ij}$ while 
on the $\Omega_j$-side of $E$ will be denoted by $E_{ji}$.
Geometrically, $E_{ij}$ and $E_{ji}$ are the same
object.

We  use the following  harmonic averages along the edges $E_{ij}$. For
$i,j\in\{1,\dots,N\}$ define
\begin{equation}\label{eq:def:averages}
\kappa_{ij} = \frac{2 \kappa_i \kappa_j}{\kappa_i + \kappa_j}
\quad
\mbox{ and }\quad  h_{ij} = \frac{2 h_i h_j}{h_i + h_j}.
\end{equation}
Note, that the functions $\kappa_{ij} $ and $h_{ij}$ are piecewise constants over the edge $E_{ij}$ on a mesh that
is obtained by merging the partitions $\mathcal{T}_{\hii}(\Omega_i)$ and
$\mathcal{T}_{\hij}(\Omega_j)$ along their common edge $E_{ij}$.

The discrete problem obtained by the DG method, see \cite{ABCM_unified_2002,Dryja_DG_03} %,MR2431403},
 is: Find $u^*_h= \{u^*_{h,i}\}^N_{i=1} \in X_h(\Omega)$, 
$ u^*_{h,i} \in X_h(\Omega_i)$, such that
\begin{equation}\label{eq:disc}
{a}^{DG}_h(u^*_h, v_h) = f(v_h) \quad \mbox{ for all }
\quad v_h=
\{v_{h,i}\}^N_{i=1} \in X_h(\Omega),
\end{equation}
where ${a}^{DG}_h(u_h, v_h) $, defined on $X_h(\Omega)\times X_h(\Omega)$, and $f(v)$,
defined on $ X_h(\Omega)$, are given by
\begin{equation}\label{eq:def:a-h}
{a}^{DG}_h(u, v) =\sum^N_{i=1} {a}^{DG}_i(u,v)~~~
\mbox{and}~~~~f(v) = \sum_{i=1}^N \int_{\Omega_i} f v_i dx.
\end{equation}
%for all $u=\{u_i\}^N_{i=1}, v=\{v_i\}^N_{i=1}\in X_h(\Omega)$. 
Here each local bilinear form ${a}^{DG}_i$ is given as a sum of three symmetric 
bilinear forms:
\begin{equation}\label{eq:def:a^hat-i}
 {a}^{DG}_i(u,v) :=  a_i(u, v) + s_i(u, v) + p_i(u, v),
\end{equation}
where $a_i$ is the bilinear form associated with the ``energy",
\begin{equation}\label{eq:def:a-i}
a_i(u, v) := \int_{\Omega_i}\!\!\kappa(x) \nabla u_i  \cdot \nabla v_i dx,
\end{equation}
the $s_i$ is the bilinear form ensuring consistency and symmetry
\begin{equation}\label{eq:def:s-i}
s_i(u, v) := \sum_{E_{ij} \subset \partial\Omega_i}
 \frac{1}{l_{ij}}\int_{E_{ij}}  \kappa_{ij} \left( \frac{\partial u_i}{\partial n_i} (v_j - v_i)
+ \frac{\partial v_i}{\partial n_i} (u_j - u_i) \right) ds,
%\nonumber
\end{equation}
and $p_i$ is the penalty bilinear form that is added for stability
\begin{equation} \label{eq:def:p-i}
p_i(u, v) := \sum_{E_{ij} \subset \partial \Omega_i}
\frac{1}{l_{ij}} \frac{\delta}{h_{ij}}
\int_{E_{ij}}
\kappa_{ij} (u_j - u_i)(v_j - v_i)ds.
\end{equation}
Here  $\kappa_{ij}$ 
is defined in (\ref{eq:def:averages}) and  $\frac{\partial}{\partial n_i}$
denotes the outward normal derivative on $\partial \Omega_i$.
The parameter $\delta$ is  a positive penalty parameter.
In order to simplify notation we included the
index $j=\bi$ in the definition
of the bilinear forms $s_i$ and $p_i$ above.
In order to include $E_{i\bi} := \partial \Omega_i \cap \partial \Omega$ in 
the summation sing, we set
 $l_{ij} = 2$ when $i,j\not = \partial$ 
and $l_{ij}=1$ when  $j=\partial$.
We also    let  $v_\bi = 0$
for all $v\in X_h(\Omega)$,  and define
$\kappa_{i\bi} = \kappa_i$ and $h_{i\bi} = h_i$.
We note that when $\kappa_{ij}$ is given by the harmonic average, then
 $\min\{\kappa_i,\kappa_j\} \leq \kappa_{ij} \leq 2\min\{\kappa_i,\kappa_j\}$.

%We note that when $\kappa_{ij}$ is given by the harmonic average then
% $\min\{\kappa_i,\kappa_j\} \leq \kappa_{ij} \leq 2\min\{\kappa_i,\kappa_j\}$.\\

For later use we define the positive bilinear forms $d_i$ as
\begin{equation}\label{eq:def:d-i}
d_i(u,v) = a_i(u,v) +  p_i(u,v),
\end{equation}
and the broken bilinear form $d_h$  for $X_h(\Omega)$:
% with weights
% given by $\kappa_i$ and $\frac{\delta}{l_{ij}}\frac{\kappa_{ij}}{h_{ij}}$ by
\begin{equation}\label{eq:def:d-h}
d_h(u,v) := \sum_{i=1}^N d_i(u,v).
\end{equation}
For $u = \{u_i\}_{i=1}^N \in X_h(\Omega)$  the associated broken norm is then
defined by
 \begin{equation}\label{eq:def:broken-norm}
\|u \|_{h,\delta}^2=d_h(u,u) 
= \sum^N_{i=1} \left\{  \parallel \kappa_i^\frac{1}{2}\nabla u_i \parallel^2_{L^2(\Omega_i)} 
+ \sum_{E_{ij} \subset \partial \Omega_i}\!\!\!
\frac{1}{l_{ij}} \frac{\delta}{h_{ij}} \int_{E_{ij}} \kappa_{ij}(u_i - u_j)^2 ds\right\}.
\end{equation}

We also have the following lemma shown in \cite[Lemma 3.1]{Dryja_DG_03}.
%The proof is analogous to the one of Lemma 3.1 from  \cite{Dryja_DG_03} 
Here we provide a sketch of the proof for the sake of completeness.

\begin{lemma} \label{lem:equiv}
There exists $\delta_0 > 0$ such that for $\delta \geq \delta_0$ and
for all  $u\in X_h(\Omega)$  the following inequalities hold:
\begin{equation*}
\gamma_0 d_i(u,u) \leq {a}^{DG}_i(u, u) \leq
\gamma_1 d_i(u,u),~~~~ i=1,\dots,N,
\end{equation*}
and
\begin{equation}\label{norm-quiv}
\gamma_0 d_h(u,u) \leq {a}^{DG}_h(u, u) \leq \gamma_1 d_h(u,u),
\end{equation}
where $\gamma_0$ and $\gamma_1$ are positive constants
independent of the
$\kappa_i$, $h_i$ $H_i$ and $u$.
\end{lemma}

\begin{proof}
First, we want to prove that
$ %\begin{equation}
\gamma_0 d_i(u,u) \leq {a}^{DG}_i(u,u).
$ % \end{equation}
Since 
${a}^{DG}_i(u,u) = a_i(u,u)+ s_i(u,u)+d_i(u,u)$,
the proof reduces to bound  $|s_i(u,u)|$.   Note that
\[
s_i(u, u) = 2\sum_{E_{ij} \subset \partial\Omega_i}
 \frac{1}{l_{ij}}\int_{E_{ij}}  \kappa_{ij}  \frac{\partial u_i}{\partial n_i} (u_j - u_i)=
2\sum_{E_{ij} \subset \partial\Omega_i} \frac{1}{l_{ij}}I_{ij}
\]
where we have defined $I_{ij}:= \int_{E_{ij}} \kappa_{ij} \frac{\partial u_i}{\partial n_i}(u_j - u_i) ds$.
We have
\begin{eqnarray*}
I_{ij}   \leq   \left\|{ \kappa_{ij}^{1/2}}\nabla u_i \right\|_{L^2(E_{ij})} \left\| \kappa_{ij}^{1/2}(u_j -u_i)\right\|_{L^2(E_{ij})}.
\end{eqnarray*}

Using the following inequality for $ u_i \in X_h(\Omega_i)$
% the finite element functions :
\begin{equation*}
h_i \left\| \kappa_{ij}^{1/2} \nabla u_i \right\|^2_{L^2(E_{ij})} \leq C \left\| \kappa_i^{1/2}\nabla u_i \right\|^2_{L^2(\Omega_i)},
\end{equation*}
the Young's inequality with arbitrary $\epsilon>0$ and the fact $h_{ij}\leq 2 h_i$ , we get
\begin{equation*}
I_{ij} \leq C \left\{\epsilon \left\|\kappa_{i}^{1/2}\nabla u_i\right\|^2_{L^2(\Omega_i)} + \frac{1}{4\epsilon} \frac{1}{2h_{ij}}\left\|\kappa_{ij}^{1/2} (u_j- u_i) \right\|^2_{L^2(E_{ij})}\right\}.
\end{equation*}

%Using this and the definition of $h_{ij},$ we get
%\begin{equation}
%I_{ij} \leq C \left\{ \epsilon \left\|{\kappa_i^{1/2}} \nabla u_i \right\|^2_{L^2(\Omega_i)} 
%            + \frac{1}{2\epsilon} \frac{1}{h_{ij}}\left\|{\kappa_{ij}}^{1/2}(u_j-u_i)\right\|^2_{L^2(E_{ij})}\right\}.
%\end{equation}
%Then bounding $1/h_i$ by $2/h_{ij}$,
Then, multiplying  by ${1}/{l_{ij}}$ and 
summing over the edges $E_{ij} \subset \Omega_i$, we get 
\begin{eqnarray*}
|s_{i}(u,u)|&\leq& 2 C N_E  \epsilon \left\|\kappa_{i}^{1/2}\nabla u_i\right\|^2_{L^2(\Omega_i)}
+ \frac{C}{4\epsilon}\sum_{E_{ij} \subset \partial\Omega_i} \frac{1}{l_{ij}}
 \frac{1}{h_{ij}}\left\|\kappa_{ij}^{1/2} (u_j- u_i) \right\|^2_{L^2(E_{ij})}\\
&=&  2 C N_E \epsilon a_i(u,u)+\frac{C}{4\epsilon \delta} p_i(u,u).
\end{eqnarray*}
Here $N_E$ denotes the number of edges of subdomain $\Omega_i$. 
Choosing $\epsilon =1/(4 C N_E)$  we get 
\[
|s_i(u,u)|\leq 0.5 a_i(u,u)+ \frac{C^2N_E}{\delta} p_i(u,u)
\]
and then 
\begin{equation*}
\begin{split}
0.5a_i(u,u)+ 
(1-\frac{C^2N_E}{\delta}) p_i(u,u) \leq 
a^{DG}(u,u) \leq 1.5 a_i(u,u)+ (1+\frac{C^2N_E}{\delta})p_i(u,u).
\end{split}
\end{equation*}

Therefore the results holds if we take $\delta\geq\delta_0>CN_E$, 
$\gamma_0=\min\{0.5, 1-(C^2N_E)/\delta\}$ and $\gamma_1=\max\{1.5,(C^2N_E)/\delta\}$.
\end{proof}

\begin{remark}
We note that $\gamma_1/\gamma_0$ in Lemma \ref{lem:equiv}
deteriorates when $\delta$ gets larger. In practice, however,
$\delta \geq \delta_0$ is
chosen such that $\delta = O(1)$, therefore,
from now on we assume that all the estimates will not depend on $\delta$.
\end{remark}

\section{Coarse-grid spaces}\label{sec:spaces}

%%%%%%%%%%%%%%%%%%%%%%%

 In this section, we will construct local multiscale basis functions.
 We will follow GMsFEM where one needs the space of snapshots, see 
\cite{egh12,Review}.
In this space of snapshots, local spectral problems are designed
and solved to compute multiscale basis functions.
To keep our presentation simple, we first use the space
of snapshots to be fine-grid functions  within a coarse region.
Thus, the local spectral problems will be posed on the fine grid. 
Next, we will discuss how a general space of snapshots can be used.

\subsection{Fine-grid snapshot space and weighted eigenvalue problem}\label{sec:msI}

Following \cite{egw10,Review} we
consider the eigenvalue problem  in $\Omega_i$ %\cite{egw10} 
for the eigenvalues $\lambda_{i, \ell}^{I}$
and the eigenfunctions $\psi_{i, \ell}^{I}(x)$:
%(formulated on a fine grid which
% is taken to be the space of snapshots, see discussions later)
\begin{equation}
\label{eq:eig:prob}
-\mbox{div}(\kappa(x) \nabla \psi_{i, \ell}^{I})=\lambda_{i, \ell}^{I} \widetilde{\kappa} \psi_{i, \ell}^{I},  ~~~x \in \Omega_i, \quad \kappa(x) \nabla \psi_{i, \ell}^{I} \cdot n=0,
~~~x \in \partial \Omega_i,
\end{equation}
where $n$ is the outer unit normal vector to $\partial \Omega_i$ and 
$\widetilde{\kappa}$ is a properly selected weight; for
scalar permeability, we select $\widetilde{\kappa}=\kappa$ while for 
tensor permeability we refer to \cite{egw10}.  The super-index $I$  is used 
to distinguish  from the other two methods we develop here (with indexes $II$ and $III$).

The eigenvalue problem considered above is solved 
in a discrete setting.  Namely, 
for any given subdomain $\Omega_i$ find $\psi_{i, \ell}^I \in X_h(\Omega_i)$ such that
\begin{equation}\label{eq:eigenvalueproblem}
a_i(\psi_{i, \ell}^{I}, z)=\lambda_{i, \ell}^{I}m_i(\psi_{i, \ell}^{I},z) \quad \mbox{ for all } z\in X_h(\Omega_i).
\end{equation}
Here, $a_i$ is defined in (\ref{eq:def:a-i}) and the bilinear form $m_i(\cdot,\cdot)$ is defined by
\begin{equation}\label{eq:def:m-i}
m_i(v,z)=\int_{\Omega_i} \kappa vz.
\end{equation}
We order the eigenvalues so that $ 0 \le \lambda_{i,1}^I\leq \lambda_{i,2}^I\leq ....\leq \lambda_{i,N_i}^I,$
where $N_i$ is the number of vertices of $\mathcal{T}_h(\Omega_i)$, i.e., the number of degrees of freedom associated to $X_h(\Omega_i)$.
 Then, in each subdomain $\Omega_i$,   we take the 
$L_i$ eigenfunctions corresponding to the smallest eigenvalues
 and use them as the multiscale basis. More precisely, define
\[
X_{H}^I(\Omega_i):=
\mbox{span}\left \{ \psi_{i,\ell}^I, 1\leq l \leq L_i\right \} \subset X_h(\Omega_i), \quad
i=1,\dots,N.
\] 

Finally, the coarse space is defined as 
\[
X_H^I(\Omega):= X_H^I(\Omega_1) \times \dots \times  X_H^I(\Omega_N) \subset X_h(\Omega).
\]
We refer to $X_H^I(\Omega)$ as an spectral coarse space due to its construction.

Now the coarse-grid  problem is: find $u^{ms,I}_H\in X_H^I(\Omega)$ such that
\begin{equation}\label{eq:disc-with-aDG}
{a}^{DG}_h(u^{ms,I}_H, v_H) = f(v_H) \quad \mbox{ for all }
\quad v_H\in X_H(\Omega).
\end{equation}
%This coarse problem is elliptic. Note that the penalty
%bilinear form involves a penalty term of the form
%$\delta\frac{1}{l_{ij}} \frac{\delta}{h_{ij}}$.
Note that the dimension of $ X_{H}^I(\Omega)$ depends on the 
number of eigenvectors chosen in each coarse block $\Omega_i$. An ideal situation would be 
when  small number of eigenvectors in $\Omega_i$ represent 
(approximate) well
the restriction of the solution to that subdomain.

\subsection{Fine-grid snapshot space with amended eigenvalue problem}\label{sec:msII}

Motivated by the error analysis developed below in Section \ref{sec:error},
we use the following modified eigenvalue problem. Find $\psi_{i,\ell}^{II} \in X_h(\Omega_i)$ such that
%we will consider  two approximations of this eigenvalue problem
\begin{equation}\label{eq:ammendedeigenvalueproblem}
a_i(\psi_{i,\ell}^{II}, z)=\lambda_{i, \ell}\left( 
m_i(\psi_{i,\ell}^{II},z) + m^{\delta}_i(\psi_{i,\ell}^{II},z) \right) \quad \mbox{ for all } z\in X_h(\Omega_i)
\end{equation}
where $a_i$ is defined in (\ref{eq:def:a-i}), $m_i(\cdot,\cdot)$ is defined by (\ref{eq:def:m-i})
and 
\begin{equation}\label{eq:def:midelta}
m_i^{\delta}(v,z)=
\sum_{E_{ij} \subset \partial \Omega_i}
\frac{1}{l_{ij}} \frac{\delta}{h_{ij}}
\int_{E_{ij}}
\kappa_{ij} vzds.
\end{equation}
These eigenvalue problems allow us to obtain simple error estimates since the eigenvectors 
can approximate fine functions simultaneously in a norm that includes interior weighted semi-norm 
in a coarse-grid block and 
weighted $L^2$-norm on the interfaces. % boundary.

As before we order the eigenvalues as 
$0 \le \lambda_{i,1}^{II}\leq \lambda_{i,2}^{II}\leq \dots \leq \lambda_{i,N_i}^{II}$  and we choose $L_i$ eigenfunctions corresponding to the smallest eigenvalues
 and use them as the multiscale basis. Define
\[
X_{H}^{II}(\Omega_i):=
\mbox{span}\left \{ \psi_{i,\ell}^{II}, 1\leq l \leq L_i\right \} \subset X_h(\Omega_i), \quad
i=1,\dots,N,
\] 
and the coarse space
\[
X_H^{II}(\Omega):= X_H^{II}(\Omega_1) \times \dots \times  X_H^{II}(\Omega_N) \subset X_h(\Omega).
\]

Now the coarse-grid  problem similar to (\ref{eq:disc-with-aDG}): find $u^{ms,{II}}_H\in X_H^{II}(\Omega)$ such that
\begin{equation}\label{eq:disc-with-aDG2}
{a}^{DG}_h(u^{ms,{II}}_H, v_H) = f(v_H) \quad \mbox{ for all }
\quad v_H\in X_H^{II}(\Omega).
\end{equation}

\subsection{General snapshot space and an example}\label{sec:msIII}

In general, one can consider a general snapshot space
for solving local eigenvalue problems. As we discussed
in the Introduction, the use of general snapshot space can
have an advantage in case additional information is known 
about the local solution space. The subset of all possible function that satisfy 
the know properties of the unknown solution can be taken as the snapshot space. 
In this way we solve eigenvalue problem only on interesting (smaller dimension) 
subspaces instead of the space of fine degrees of freedom. For example,  if solutions need to be 
computed only for a subspace
of possible source terms,  one can restrict the snapshot space to the space
of local solutions for those sources  and do not consider all fine-grid functions. To demonstrate 
that it 
is possible to use a snapshot space strictly smaller than $X_h(\Omega)$,
we consider an example where the snapshot space 
 consists of  all local solutions of the homogeneous equation with
boundary conditions restriction on the boundary of the 
finite element nodal basis functions 
(or the set of all discrete $a_i-$harmonic 
functions in each block). 
%
%corresponding to boundary conditions 
%with unit force boundary conditions.
%
More precisely, for the nodal basis function 
$\delta_k (x)$ corresponding to the 
$k-$th node on $\partial\Omega_i$,
we consider the problem %solutions of
\begin{equation}
\label{eq:eig:prob1}
-\mbox{div}(\kappa \nabla \phi_{i,k})=0  ~~~ \mbox{ in } \Omega_i, \quad  \phi_{i,k} =\delta_{k}
~~~\mbox{ on }  \partial \Omega_i,
\end{equation}
%where $\delta_{l}(x)$ is a function that is zero everywhere except at the node $l$. 
The $\phi_{i,k} \in X_h(\Omega_i)$, $k=1, \dots, M_i$, is
the (finite element) solution of this local problem. Here
$M_i$ denote the number of nodal basis function corresponding to nodes 
on $\partial \Omega_i$.
% by $\psi_l^i \in X_h(\Omega_i)$. % (abuse of notation).
%
Then the space of snapshots is defined by
\begin{equation}\label{eq:def:snapshot}
X_h^{\mbox{snap}}(\Omega_i)=\mbox{span}\{ \phi_{i,k} , 1\leq k \leq M_i\}, \quad i=1, \dots, N.
\end{equation}
%for $i=1,\dots,N$.

\begin{remark}\label{rem:referencesol}
Here, the reference solution we want to approximate on a coarse grid  is the Galerkin 
projection of $u^*_h$, solution of (\ref{eq:disc}), into the global snapshot space
$X_h^{\mbox{snap}}(\Omega)=
X_h^{\mbox{snap}}(\Omega_1)\times \dots \times X_h^{\mbox{snap}}(\Omega_N)$.
\end{remark}

Our objective is to construct  a possibly smaller dimension space $X_H^{\mbox{snap}}(\Omega_i)$ 
which is a subspace of  $X_h^{\mbox{snap}}(\Omega_i)$. The construction
is done using  appropriate spectral decomposition. For this, define the matrices 

\[
A_i^{\mbox{snap}} = [ a_i (\phi_{i,k}, \phi_{i,k'}) ]_{k,k'=1,}^{M_i} \mbox{ and }
M_i^{\mbox{snap}} = [ m^{\delta}_i (\phi_{i,k}, \phi_{i,k'}) ]_{k,k'}^{M_i}
\]
and solve  the following algebraic eigenvalue problem 
\begin{equation}
\label{eq:eig11}
A_i^{\mbox{snap}} \alpha_{i,\ell}=\lambda_{i,\ell}^{\mbox{snap}} M_i^{\mbox{snap}} \alpha_{i,\ell}.
\end{equation}
We write $\alpha_{i,\ell}=(\alpha_{i,\ell; 1},\dots,\alpha_{i,\ell;M_i})\in \mathbb{R}^{M_i}$ 
and define the corresponding finite element functions, $\psi_{i,\ell}^{III}\in X_h(\Omega_i)$ as
\[
\psi_{i,\ell}^{III}=\sum_{k=1}^{M_i}\alpha_{i,\ell;k} \phi_{i,k}, \quad \ell=1,\dots, M_i.
\]

Note that the matrices $A_i^{\mbox{snap}} $ and $M_i^{\mbox{snap}} $ are
computed in the space of snapshots in $\Omega_i$.
%The space $V^{i}$ is constructed by selecting 
%$L_i$ eigenvectors corresponding to largest eigenvalues. 
Assume that 
$ 0 \le \lambda_{i,1}^{\mbox{snap}}\leq ...\leq 
\lambda_{i,M_i}^{\mbox{snap}},
$
and choose the  $L_i$ eigenvectors $\psi_{i,1}^{III}, \dots, \psi_{i,L_i}^{III} $
that correspond to {the smallest}  $L_i$ eigenvalues.
We intoduce 
\[
X_H^{III}(\Omega_i)=\mbox{span}\left \{\psi_{i,\ell}^{III} : l=1, \dots, L_i\right \}  \mbox{ for } i=1, \dots, N
\]
and define the global coarse space by
\[
X_H^{III}(\Omega):= X_H^{III}(\Omega_1) \times \dots \times  X_H^{III}
(\Omega_N) \subset X_h(\Omega).
\]
The coarse problem is: find $u^{ms,III}_H\in X_H^{III}(\Omega)$ such that
\begin{equation}\label{eq:disc-with-aDG3}
{a}^{DG}_h(u^{ms,III}_H, v_H) = f(v_H) \quad \mbox{ for all }
\quad v_H\in X_H^{III}(\Omega).
\end{equation}

\begin{remark}
Note that, according to the definition of $m_i^\delta$ in (\ref{eq:def:midelta}), 
the matrix $M_i^{\mbox{snap}} $ scales with $1/h_{ij}$. 
Then, the resulting eigenvalues scale  with $h_{ij}$ while 
the eigenspaces do not depend on $h_{ij}$. 
A similar situation is  also valid for Method II and the 
eigenvalue problem (\ref{eq:ammendedeigenvalueproblem}).  
It is easy to see from our main Theorem \ref{thm:error} (estated and proved below) 
that this scaling does not affect the convergence rate with respect to the 
number of eigenvectors used in the coarse space.
\end{remark}

\begin{remark}
Instead of the $M_i^{\mbox{snap}} $ defined above, we 
can use 
\[
M_i^{\mbox{snap}} = [
 m_i (\phi_{i,k}, \phi_{i,k'})+
 m^{\delta}_i (\phi_{i,k}, \phi_{i,k'}) ]_{k,k'=1}^{M_i}.
\]
\end{remark}

\subsection{Stability estimate}

In this section, we present a best approximation  result for the coarse-grid
solution.

\begin{lemma}\label{lem:bestapprox}
Let $u^*_h\in X_h(\Omega)$ and $u_H^{ms,I}\in X_H^{I}(\Omega)$ be the solutions of 
(\ref{eq:diff}) and (\ref{eq:disc-with-aDG}), correspondingly. 
We have
\begin{equation}
d_h(u^*_h-u_H^{ms,I}, u^*_h-u_H^{ms,I}) \leq C_1 d_h(u^*_h-v, u^*_h-v)~~ \mbox{for~ all}~~ v \in 
X_H^I(\Omega)
\end{equation}
with $C_1$ is independent of $\kappa_i, h_i, H_i, u^*_h$ and $u_H^{ms,I}.$
\end{lemma}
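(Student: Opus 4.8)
The plan is to recognize this as a Céa-type best-approximation estimate and to exploit that $a^{DG}_h$ is a \emph{symmetric, positive-definite} form on $X_h(\Omega)$, so that it defines an inner product whose own Cauchy--Schwarz inequality does all the work. First I would record Galerkin orthogonality. Since both problems use the same right-hand side $f$ and $X_H^I(\Omega)\subset X_h(\Omega)$, subtracting the two variational identities (here I read the statement's reference to (\ref{eq:diff}) as the discrete problem (\ref{eq:disc}), since $u^*_h\in X_h(\Omega)$) and testing against $v_H\in X_H^I(\Omega)$ gives
\[
a^{DG}_h(u^*_h - u_H^{ms,I},\, v_H) = 0 \quad \mbox{ for all } v_H \in X_H^I(\Omega).
\]

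Next I would observe that, by Lemma \ref{lem:equiv}, for $\delta\ge\delta_0$ the form $a^{DG}_h$ satisfies $a^{DG}_h(u,u)\ge\gamma_0 d_h(u,u)\ge 0$ with equality only at $u=0$; together with the symmetry of $a_i$, $s_i$, $p_i$ (hence of $a^{DG}_h$) this shows $a^{DG}_h(\cdot,\cdot)$ is a genuine inner product on $X_h(\Omega)$. Writing $e:=u^*_h-u_H^{ms,I}$ and fixing an arbitrary $v\in X_H^I(\Omega)$, I note $u_H^{ms,I}-v\in X_H^I(\Omega)$, so orthogonality gives $a^{DG}_h(e,\,u_H^{ms,I}-v)=0$ and hence $a^{DG}_h(e,e)=a^{DG}_h(e,\,u^*_h-v)$. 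Applying coercivity and then the Cauchy--Schwarz inequality for $a^{DG}_h$,
\[
\gamma_0\, d_h(e,e) \le a^{DG}_h(e,e) = a^{DG}_h(e,\,u^*_h-v) \le a^{DG}_h(e,e)^{1/2}\, a^{DG}_h(u^*_h-v,\,u^*_h-v)^{1/2},
\]
and cancelling one factor of $a^{DG}_h(e,e)^{1/2}$ yields $a^{DG}_h(e,e)\le a^{DG}_h(u^*_h-v,\,u^*_h-v)$.

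Finally I would convert both ends back to the $d_h$-norm with the two-sided bound of Lemma \ref{lem:equiv}:
\[
\gamma_0\, d_h(e,e) \le a^{DG}_h(e,e) \le a^{DG}_h(u^*_h-v,\,u^*_h-v) \le \gamma_1\, d_h(u^*_h-v,\,u^*_h-v),
\]
so the claim holds with $C_1=\gamma_1/\gamma_0$, which by the Remark following Lemma \ref{lem:equiv} is $O(1)$ and independent of $\kappa_i$, $h_i$, $H_i$ and of the solutions $u^*_h$, $u_H^{ms,I}$.

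The one point requiring care is the consistency/symmetry term $s_i$: it is indefinite, so a naive ``continuity plus coercivity'' argument in the $d_h$-norm is unavailable, since $a^{DG}_h$ is not bounded by $d_h$ with a contrast-independent constant on off-diagonal pairs. The device that sidesteps this is to use the Cauchy--Schwarz inequality \emph{of $a^{DG}_h$ itself} rather than a separate continuity estimate, so the indefinite $s_i$ contributions never have to be controlled directly; the only prerequisite is that $a^{DG}_h$ be an inner product, which is exactly what Lemma \ref{lem:equiv} (with $\delta\ge\delta_0$) provides. I expect this to be the main conceptual step, with everything else being routine.
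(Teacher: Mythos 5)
Your proof is correct and follows essentially the same route as the paper's: Galerkin orthogonality, the Cauchy--Schwarz inequality for the inner product $a^{DG}_h$ itself, and the two-sided norm equivalence of Lemma \ref{lem:equiv}. The only difference is cosmetic --- by cancelling the factor $a^{DG}_h(e,e)^{1/2}$ before converting to the $d_h$-norm you obtain $C_1=\gamma_1/\gamma_0$, marginally sharper than the paper's $C_1=(\gamma_1/\gamma_0)^2$.
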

\begin{proof}
%{\it Proof.}
For all $v \in X_H^{I}(\Omega),$
\begin{equation}
a_h^{DG}(u^*_h, v)=f(v) \quad  \mbox{ and } \quad a_h^{DG}(u_H^{ms,I}, v)=f(v).
\end{equation}
Then $a_h^{DG}(u^*_h-u_H^{ms,I}, v)=0$ and since $u_H^{ms,I} \in X_H^I(\Omega),$
\begin{equation}
a_h^{DG}(u^*_h-u_H^{ms,I}, u^*_h-u_H^{ms,I}) = a_h^{DG}(u^*_h-u_H^{ms,I}, u^*_h-v).
\end{equation}
Using a Cauchy-Schwarz inequality and (\ref{norm-quiv}) in Lemma \ref{lem:equiv},
\begin{equation*}
\begin{split}
\gamma_0 d_h(u^*_h-u_H^{ms,I}, u^*_h-u_H^{ms,I}) & \leq a_h^{DG}(u^*_h-u_H^{ms,I}, 
u^*_h-u_H^{ms,I}) = a_h^{DG}(u^*_h-u_H^{ms,I}, u^*_h-v) \\
 & \leq a_h^{DG}(u^*_h-u_H^{ms,I}, u^*_h-u_H^{ms,I})^{1/2} a_h^{DG}(u^*_h-v, u^*_h-v)^{1/2}\\
 & \leq \gamma_1 d_h(u^*_h-u_H^{ms,I}, u^*_h-u_H^{ms,I})^{1/2} d_h(u^*_h-v, u^*_h-v)^{1/2}.
\end{split}
\end{equation*}
Taking $C_1 = ({\gamma_1}/{\gamma_0})^2,$ we get
\begin{equation*}
d_h(u^*_h-u_H^{ms,I}, u^*_h-u_H^{ms,I}) \leq C_1 d_h(u^*_h-v, u^*_h-v)
\end{equation*}
and this completes the proof.
\end{proof}

Analogous best approximation results, with respect to the fine-grid reference solutions,  hold true for the other two coarse problems 
(\ref{eq:disc-with-aDG2}) and (\ref{eq:disc-with-aDG3}).

\subsection{Error estimates in terms of the local energy captured}\label{sec:error}
The following theorem gives and error estimates with respect to the number of eigenvectors used. Therefore, 
we obtain convergence to the reference solution when we add more and more eigenvectors to
the coarse space. The error estimates are written in terms of the amount of local energy (of the reference solution)
that is captured using the selected eigenmodes.

\begin{theorem}\label{thm:error}
Let $u^*_h$ and $u_H^{ms,II}$ be the solution of 
Problem  (\ref{eq:disc}) and (\ref{eq:disc-with-aDG2}), respectively. 
Put $u^*_h=\{ u^*_i\}_{i=1}^N$. 
In each subdomain we can write 
\[
u^*_i=\sum_{\ell=1}^{N_i} c_{\ell}(u^*_i) \psi_{i,\ell}^{II} \quad \mbox{ and }\quad 
a_i(u^*_i,u^*_i)=\sum_{\ell=1}^{N_i} \lambda_{i,\ell}^{II} c_\ell(u^*_i)^2,
\]
where $ c_\ell(u^*_i)=m_i(u^*_i,\psi_{i,\ell}^{II})+m_i^\delta(u^*_i,\psi_{i,\ell}^{II})$. 
The following error estimate  holds 
\[ \displaystyle
d_h(u^*_h-u_H^{ms,II}, u^*_h-u_H^{ms,II}) \leq C_1\left(1+\frac{4}{ \displaystyle\min_{1\leq i\leq N}\lambda_{i,L_i+1}^{II}}\right) 
\sum_{i=1}^N \sum_{\ell=L_i+1}^{N_i} \lambda_{i,\ell}^{II} c_\ell(u^*_i)^2.
\]

\end{theorem}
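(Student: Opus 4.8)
The plan is to combine the best-approximation bound (the analogue of Lemma~\ref{lem:bestapprox} valid for Method~II) with the spectral structure of the amended eigenvalue problem~(\ref{eq:ammendedeigenvalueproblem}). By that best-approximation estimate it suffices to exhibit a single competitor $v\in X_H^{II}(\Omega)$ and to bound $d_h(u^*_h-v,u^*_h-v)$; the natural choice is the $(m_i+m_i^\delta)$-orthogonal projection of $u^*_i$ onto the retained eigenmodes, namely $v_i=\sum_{\ell=1}^{L_i}c_\ell(u^*_i)\psi_{i,\ell}^{II}$, so that the residual in each block is $w_i:=u^*_i-v_i=\sum_{\ell=L_i+1}^{N_i}c_\ell(u^*_i)\psi_{i,\ell}^{II}$.

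First I would record the spectral orthogonality. Normalizing the eigenfunctions so that $(m_i+m_i^\delta)(\psi_{i,\ell}^{II},\psi_{i,k}^{II})=\delta_{\ell k}$, problem~(\ref{eq:ammendedeigenvalueproblem}) gives $a_i(\psi_{i,\ell}^{II},\psi_{i,k}^{II})=\lambda_{i,\ell}^{II}\delta_{\ell k}$. Expanding $w_i$ then yields $a_i(w_i,w_i)=\sum_{\ell=L_i+1}^{N_i}\lambda_{i,\ell}^{II}c_\ell(u^*_i)^2$ and $(m_i+m_i^\delta)(w_i,w_i)=\sum_{\ell=L_i+1}^{N_i}c_\ell(u^*_i)^2$. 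Since $\lambda_{i,\ell}^{II}\ge\lambda_{i,L_i+1}^{II}$ for every $\ell\ge L_i+1$ and $m_i\ge 0$, I can bound the local weight by $m_i^\delta(w_i,w_i)\le(m_i+m_i^\delta)(w_i,w_i)\le (\lambda_{i,L_i+1}^{II})^{-1}\sum_{\ell=L_i+1}^{N_i}\lambda_{i,\ell}^{II}c_\ell(u^*_i)^2$.

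The step that needs genuine care, and which I expect to be the main obstacle, is passing from the penalty form $p_i$, which measures jumps $(w_j-w_i)$ across coarse edges and hence couples neighbouring blocks, to the purely local weight $m_i^\delta$ used in the eigenvalue problem. Here I would use $(w_j-w_i)^2\le 2w_i^2+2w_j^2$ edge by edge and then exploit the symmetry of the edge data ($\kappa_{ij}=\kappa_{ji}$, $h_{ij}=h_{ji}$, $l_{ij}=l_{ji}$, and $E_{ij}$, $E_{ji}$ being the same geometric object) to reindex the $w_j^2$-contributions. Summing over all subdomains, each of the two resulting groups of terms collapses to $\sum_i m_i^\delta(w_i,w_i)$, giving the clean bound $\sum_i p_i(w,w)\le 4\sum_i m_i^\delta(w_i,w_i)$; this is exactly where the constant $4$ in the statement originates.

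Finally I would assemble the pieces. Writing $d_h(w,w)=\sum_i a_i(w_i,w_i)+\sum_i p_i(w,w)$ with $w=u^*_h-v$ and inserting the two block estimates above yields
\[
d_h(w,w)\le \sum_{i=1}^N\sum_{\ell=L_i+1}^{N_i}\lambda_{i,\ell}^{II}c_\ell(u^*_i)^2+\frac{4}{\displaystyle\min_{1\le i\le N}\lambda_{i,L_i+1}^{II}}\sum_{i=1}^N\sum_{\ell=L_i+1}^{N_i}\lambda_{i,\ell}^{II}c_\ell(u^*_i)^2,
\]
after replacing each factor $(\lambda_{i,L_i+1}^{II})^{-1}$ by the uniform bound $(\min_i\lambda_{i,L_i+1}^{II})^{-1}$. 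Applying the best-approximation inequality with constant $C_1$ to the left-hand side of the theorem then reproduces the claimed estimate. The only structural facts relied upon are the positivity of $m_i$ and the solvability of the generalized eigenproblem, both guaranteed by $\kappa>0$.
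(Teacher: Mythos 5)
Your proposal is correct and follows essentially the same route as the paper's own proof: you choose the spectral projection onto the retained modes as the competitor in the best-approximation lemma, bound the penalty jumps via $(w_j-w_i)^2\le 2w_i^2+2w_j^2$ together with the edge symmetry to obtain $\sum_i p_i(w,w)\le 4\sum_i m_i^\delta(w_i,w_i)$, and use the eigenvalue ordering to control $m_i(w_i,w_i)+m_i^\delta(w_i,w_i)$ by $(\lambda_{i,L_i+1}^{II})^{-1}a_i(w_i,w_i)$, exactly as in the paper. No gaps; nothing further is needed.
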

\begin{proof}
Using the truncated expansion of solutions, define the interpolation $I^H(u^*_h)$ 
 by 
\[
I^H(u^*_h)=\{ I^H_i(u^*_i)\}_{i=1}^N \mbox{ where }  I^H_i(u^*_i)=
\sum_{\ell=1}^{L_i} c_\ell(u^*_i) \psi_{i,\ell}^{II}
\]
where $ c_\ell(u^*_i)=m_i(u^*_i,\psi_{i,\ell}^{II})+m_i^\delta(u^*_i,\psi_{i,\ell}^{II})$, 
$i=1,\dots,N$. Note that  
$ I^H_i(u^*_i)$ is the projection of $u^*_i$ into the space spanned by the first $L_i$ eigenvectors.
Now we take $v=I^H(u^*_h)$ in Lemma \ref{lem:bestapprox} to obtain
\begin{equation}\label{thm:1}
d_h(u^*_h-u_H^{ms,II}, u^*_h-u_H^{ms,II})  \leq C_1 d_h(u^*_h-I^H(u^*_h), u^*_h-I^H(u^*_h))=d_h(e, e)
\end{equation}
where we have defined $e=\{e_i\}_{i=1}^N$ where
\[
e_i=u^*_i-I^H_i(u^*_i)=\sum_{\ell=L_i+1}^{N_i} c_\ell(u^*_i)  \psi_{i,\ell}^{II}.
\]
Now we bound $d_h(e,e)$. First we observe that 
\begin{equation}\label{thm:2}
d_h(e,e)=\sum_{i=1}^N  \big( a_i(e_i,e_i)+ p_i(e,e)\big).
\end{equation}
The second term in this sum can be bounded as follows. We write
\begin{eqnarray*}
p_i(e,e)&=&\sum_{E_{ij} \subset \delta \Omega_i}
\frac{1}{l_{ij}} \frac{\delta}{h_{ij}}
\int_{E_{ij}}
\kappa_{ij} (e_j - e_i )^2ds\\
&\leq & 2\sum_{E_{ij}  \subset \partial \Omega_i} \left( \frac{1}{l_{ij}} \frac{\delta}{h_{ij}}
\int_{E_{ij}}
\kappa_{ij} (e_j )^2ds +
\frac{1}{l_{ij}} \frac{\delta}{h_{ij}}
\int_{E_{ij}}
\kappa_{ij} (e_i )^2ds\right)\\
&\leq & 2\sum_{E_{ij}  \subset \partial \Omega_i}  \frac{1}{l_{ij}} \frac{\delta}{h_{ij}}
\int_{E_{ij}}
\kappa_{ij} (e_j )^2ds +2m_i^\delta (e_i,e_i).
\end{eqnarray*}
Adding over all subdomains we get
\begin{eqnarray}
\sum_{i=1}^Np_i(e,e)&\leq & 
2 \sum_{i=1}^N \sum_{E_{ij}  \subset \partial \Omega_i}  \frac{1}{l_{ij}} \frac{\delta}{h_{ij}}
\int_{E_{ij}}
\kappa_{ij} (e_i )^2ds +2\sum_{i=1}^Nm_i^\delta (e_i,e_i) \nonumber\\
&=& 4\sum_{i=1}^Nm_i^\delta (e_i,e_i). \label{thm:3}
\end{eqnarray}
On the other hand, if we use the increasing order of eigenvalues of eigenvalue problem (\ref{eq:ammendedeigenvalueproblem}), 
we get,
\begin{equation}\label{thm:4}
m_i^\delta (e_i,e_i)\leq m_i (e_i,e_i)+m_i^\delta (e_i,e_i) \leq \frac{1}{\lambda_{i,L_i+1}^{II}} a_i(e_i,e_i)
\end{equation}
which, together with (\ref{thm:2}) and (\ref{thm:3}), gives 
 \begin{eqnarray*}
\sum_{i=1}^N \big( a_i(e_i,e_i)+p_i(e,e) \big) &\leq & \sum_{i=1}^N  (a_i(e_i,e_i)+4m_i^\delta(e_i,e_i)\big) \\
&=&  \sum_{i=1}^N \left(1+\frac{4}{\lambda_{i, L_i+1}^{II}}\right)
\sum_{\ell=L_i+1}^{N_i} \lambda_{i,\ell}^{II} c_\ell(u^*_i)^2\\
&\leq & \left(1+\frac{4}{ \displaystyle \min_{1\leq i\leq N}\lambda_{i,L_i+1}^{II}}\right) 
\sum_{i=1}^N
\sum_{\ell=L_i+1}^{N_i} \lambda_{i,\ell}^{II} c_\ell(u^*_i)^2.
\end{eqnarray*}
%
%\begin{eqnarray*}
%d_h(u^*_h-u_H^{ms,II}, u^*_h-u_H^{ms,II})&\leq& C d_h(u^*_h-I^H(u^*_h), u^*_h-I^H(u^*_h))=d_h(e, e)\\
%&\leq & C_1 \sum_{i=1}^N \left(1+\frac{1}{\lambda_{i,L_i+1}^{II}}\right) a_i(e_i,e_i)\\
%&=& C_1 \sum_{i=1}^N \left(1+\frac{1}{\lambda_{i, L_i+1}^{II}}\right)
%\sum_{\ell=L_i+1}^{N_i} \lambda_{i,\ell}^{II} c_\ell(u^*_i)^2\\
%&\leq & C_1\left(1+\frac{1}{\min_{1\leq i\leq N}\lambda_{i,L_i+1}^{II}}\right) 
%\sum_{i=1}^N
%\sum_{\ell=L_i+1}^{N_i} \lambda_{i,\ell}^{II} c_\ell(u^*_i)^2.
%\end{eqnarray*}
This completes the proof.
\end{proof}
\begin{remark}
Using our analysis, in order to obtain further bounds for the error we have to study 
the convergence 
of the sum $\sum_{\ell=1}^{N_i} \lambda_{i,\ell}^{II} c_\ell(u^*_i)^2$ (that is, 
the decay  of the coefficients $c_\ell(u^*_i)^2$ with increasing $\ell$). This
can depend on the smoothness of the solution and it will be matter of 
further research.
\end{remark}

\begin{remark}
A similar result holds for the method constructed with snapshot 
space presented in Subsection \ref{sec:msIII}. In this case the reference 
solution is the solution obtaining by a Galerkin projection on the snapshot space. 
See Remark \ref{rem:referencesol}.
\end{remark}

\section{Numerical experiments}\label{sec:numerics}

In this section we present representative numerical experiments. 
In particular, we compute 
the coarse (or upscaled) solution  and study the error with respect to the
reference solution (or the fine-grid solution of (\ref{eq:disc})).  We choose $\delta=4$ 
for all the numerical test presented here.
We note that the solution of (\ref{eq:disc}) depends on both 
fine-scale and coarse-scale parameters, $h$ and $H$. We 
are interested mainly on the convergence  (to the reference solution)
when we sequentially add more and more basis functions. 
We study the error behavior due to the addition of coarse basis 
functions for fixed value of $h$ and $H$. 

We consider the domain $\Omega = (0,1)^2$ and divide $\Omega$ into $N=M\times M$ square coarse 
blocks, $\{\Omega_i\}_{i=1}^{N}$, which are unions of fine elements. 
In this case  $H=1/M$ is the coarse mesh parameter.  Inside each subdomain 
$\Omega_i$ we generate a structured triangulation with $m$ subintervals in each coordinate 
direction (and thus  $h=1/(Mm)$ is the fine mesh parameter).
We consider the solution of Equation (\ref{eq:disc}) with $f=1$ and a high contrast coefficient 
described in  Figure \ref{fig:coef-perm2_2}.
This coefficient is one in the white background and 
value $\eta$ in the gray regions representing high-contrast channels and 
high-contrast inclusions. Thus, $\eta$ represents the contrast of the media, namely the ratio
of the maximum and minimum values of $\kappa(x)$.
\begin{figure}[H]
\centering
\includegraphics[width=5cm, height=5cm]{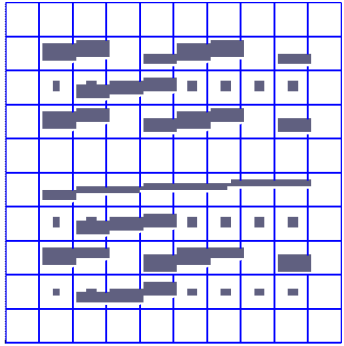}
\caption{High contrast coefficient.}
\label{fig:coef-perm2_2}
\end{figure}

In the following we compute the norm of the error  
$e=u^*_{h}-u_{H}$ between the 
fine-scale solution obtained by solving (\ref{eq:disc}) and the coarse-grid solution $u_{H}$,
which is one of the following coarse-grid function:
% computed by solving the coarse-grid problem by employing one of 
% the proposed three coarse-grid spaces: 
1. $u_H^{ms,I}$
solution of \eqref{eq:disc-with-aDG},  
2. $u_H^{ms,II}$ the solution of  \eqref{eq:disc-with-aDG2}, 
or
3.  $u_H^{ms,III}$ the solution of  \eqref{eq:disc-with-aDG3}. 
The  total error is 
$\|e \|_{h,1}^2$ where $\|\cdot\|_{h,\delta}^2$ defined in (\ref{eq:def:broken-norm}). 
The relative error is computed as $\|e\|_{h,1}^2/\|u^*_{h}\|_{h,1}^2$.
The error is divided into two quantities: 
\begin{itemize}
\item  \emph{Interior Error}:  (square of the) broken $H^1-$semi-norm of the error
\[
\sum_{i=1}^N a_i(e,e)= \sum^N_{i=1}   \parallel \kappa_i^\frac{1}{2}\nabla e_i \parallel^2_{L^2(\Omega_i)}.
\]
% referred to as \emph{Interior Error} and
\item \emph{Interface Error}: (square of the) $L^2-$norm of the jump of the error across the edges
\[
\sum_{i=1}^N\sum_{E_{ij} \subset \partial \Omega_i}\!\!\!
\frac{1}{l_{ij}} \frac{1}{h_{ij}} \int_{E_{ij}} \kappa_{ij}(e_i - e_j)^2. 
\]
%referred to as  \emph{Interface Error}.
\item {\emph{Energy error}}: (square of the) DG bilinear form, that is, $a_h^{DG}(e,e)$.
\end{itemize}

\subsection{Fine-grid snapshot space and original eigenvalue problem}\label{sec:fg1}
In this Subsection we present the numerical experiments for  the method introduced  
in Subsection \ref{sec:msI} and show the error obtained when the dimension of the coarse 
space is increased.

First, we recall that for high-contrast problems we include the eigenvectors 
corresponding to small eigenvalues  (that asymptotically vanish 
as the contrast increases). 
We  denote by $L_{i}^{small}$ the number of these small eigenvalues in $\Omega_i$. 
To see the effect of adding more basis functions, we select additional $L_i^{add}$
eigenvalues so the total number of eigenvalues selected in the block $\Omega_i$ is 
$L^{small}_i+L^{add}_i$. We show that the error decays as $L_i^{add}$ increases.
For the coefficient $\kappa(x)$ and  coarse mesh shown 
on Figure \ref{fig:coef-perm2_2} there is only one such and eigenvalue 
in each coarse-grid block $\Omega_i$
and therefore
$L_i^{small}=1$, $i=1,2,\dots,N$. 
Figure \ref{fig:solution-perm2_2}  illustrates the effect of using increasing number 
of eigenvectors in the solution. 
We show the fine-scale solution and  coarse-scale solutions computed with three different 
coarse spaces
$L_i^{add}=0,2,11$. 
\begin{figure}[htb]
\centering
\includegraphics[width=13cm, height=9cm]{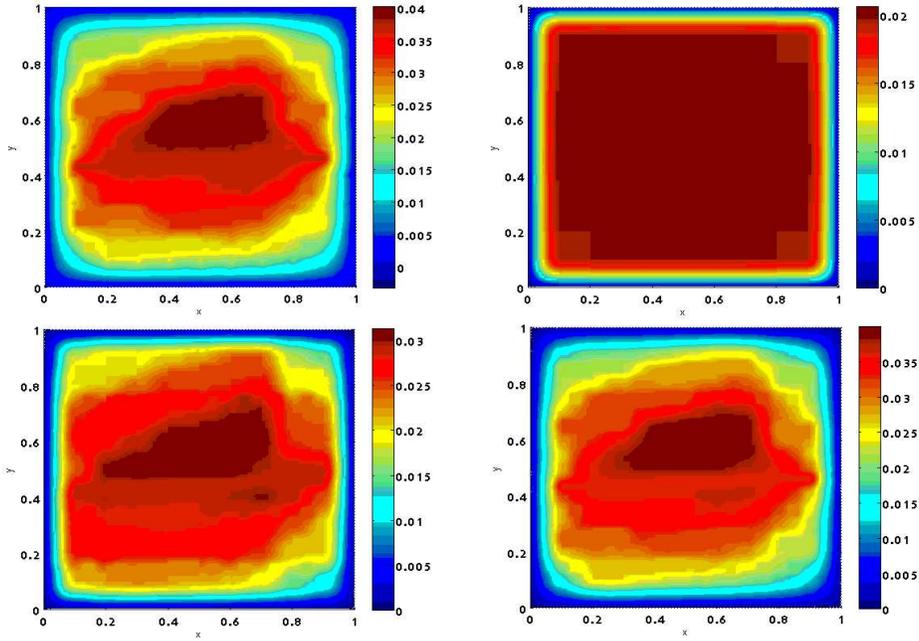}
\caption{Illustration of fine and coarse solutions. Fine-scale solution $u^*_h$ (top left).
Coarse-scale solution $u_H^{ms,I}$ with $L_i^{add}=0$ eigenvalues (top right).
Coarse-scale solution $u_H^{ms,I}$ with $L_i^{add}=3$ eigenvalues (bottom left).
Coarse-scale solution $u_H^{ms,I}$ with $L_i^{add}=11$ eigenvalues (bottom right).}
\label{fig:solution-perm2_2}
\end{figure}
\begin{figure}[htb]
{\includegraphics[width=10cm, height=5cm]{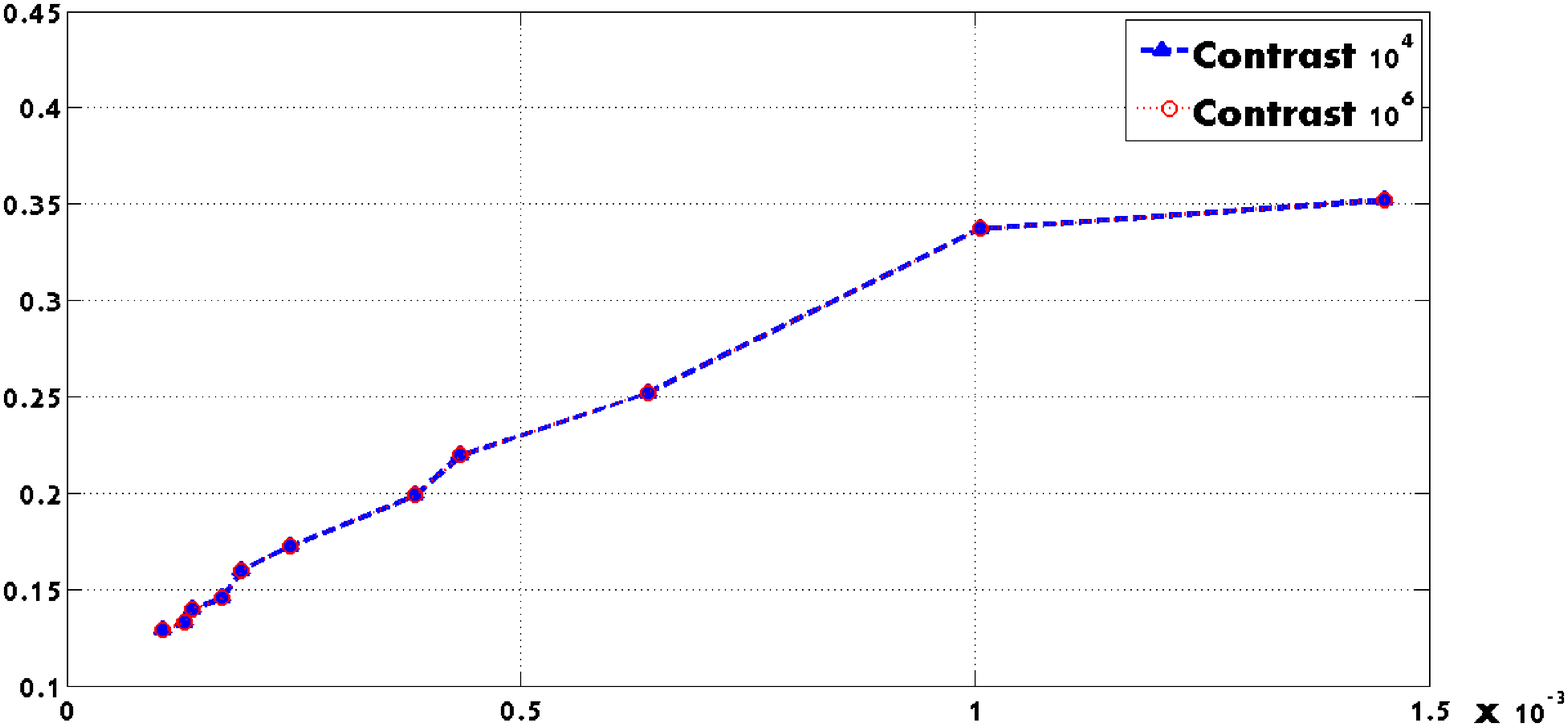}}
%{\includegraphics[width=7cm, height=4cm]{lambda5x20_6}}
%\label{fig:lambdaminplot4}
\caption{Total error of Method I (in Section \ref{sec:msI}) vs. $1 / \lambda_{min}$. Here $h=1/100$ and the contrast is $10^6$. 
Coarse mesh size  $H=1/10$ and contrast $10^4$ and $10^6$.}
\label{lambdaminplot}
\end{figure}

\begin{table}[h]\footnotesize
\caption{Numerical results for the Method I (see Section \ref{sec:msI}). Here, $h=1/100$, $H=1/10$, $\mu = 10^4$ and $\mu=10^6$ (in parenthesis)} % title of Table
\centering % used for centering table
\begin{tabular}{r r r r r r r r}
\hline
$L_i^{add}$& Dim. & Interface error & Interior error & Total error&$\lambda_{\min}$ \\ [1ex]
%heading
\hline
0&100 (100)&         0.026 (0.026)&       0.326 (0.326)&      0.3522 (0.3522)&      689.4 ( 689.3)\\
%1&200 (200)&         0.028 (0.028)&	0.310 (0.309)&	0.3374 (0.3370)&     994.8 ( 994.8)\\
2&300 (300)&	0.031 (0.032)&	0.221 (0.220)&	0.2523 (0.2516)&	1562.2 (1561.7)\\
%3&400 (400)&	0.030 (0.030)&	0.190 (0.189)&	0.2198 (0.2192)&	2306.7 (2306.8)\\
4&500 (500)&	0.028 (0.029)&	0.171 (0.170)&	0.1991 (0.1984)&	2607.5 (2607.0)\\
%5&600 (600)&	0.028 (0.028)&	0.145 (0.144)&	0.1725 (0.1717)&	4072.0 (4072.0)\\
6&700 (700)&	0.027 (0.027)&	0.133 (0.131)&	0.1600 (0.1581)&	5199.4 (5199.3)\\
%7&800 (800)&	0.027 (0.027)&	0.118 (0.117)&	0.1456 (0.1447)&	5839.0 (5838.5)\\
8&900 (900)&	0.026 (0.027)&	0.114 (0.113)&	0.1399 (0.1392)&	7237.9 (7237.6)\\
%9&1000 (1000)&	0.025 (0.026)&	0.107 (0.106)&	0.1328 (0.1319)&	7675.6 (7674.7)\\
10&1100 (1100)&	0.025 (0.025)&	0.104 (0.103)&	0.1293 (0.1283)&	9509.1 (9509.0)\\ [1ex] % [1ex] adds vertical space
\hline %inserts single line
\end{tabular}
\label{table:errorforlambda_10^4_II} % is used to refer this table in the text
\end{table}

The results for the computation of interior and interface errors are presented in 
Table \ref{table:errorforlambda_10^4_II} for $h=1/100$ and $H=1/10$
and two different contrasts.   
The convergence with respect to the 
minimum left out eigenvalue is shown in Figure \ref{lambdaminplot} (left). 
In this case, we solve a $100\times 100$ eigenvalue problem in each coarse block.
From the results we see  convergence to the reference solution (fine-grid solution). We also
observe error decay proportional to the minimum left out  eigenvalue across all coarse blocks. 
In particular,   for $H=1/10$, we need only 3 or 4 additional functions 
to get an interior error or the order of $17\%$. This error is computed with
respect to the fine-grid solution with fine-grid parameter $h=1/100$. Note that, in this case, we have total of
four  or five basis functions per subdomain which is comparable to the number of degrees of freedom 
of a classical DG method on the coarse grid.

\subsection{Error vs. coarse problem penalty scaling}
Now we test the error when we change the scaling of the penalty. 

We recall that the fine-scale problem in (\ref{eq:disc})  uses 
a penalty term scaled by $\delta\frac{1}{l_{ij}}\frac{1}{h_{ij}}$.
In the classical SIPG formulation  on the coarse grid one uses a penalty  scaled by $\delta\frac{1}{l_{ij}}\frac{1}{H}.$ 
Here we experiment by computing the coarse solutions with several penalties in the range from $1/H$ to $1/h$ to 
identify a good penalty parameter for the coarse problem.
\begin{figure}[htb]
{\includegraphics[width=10cm, height=7cm]{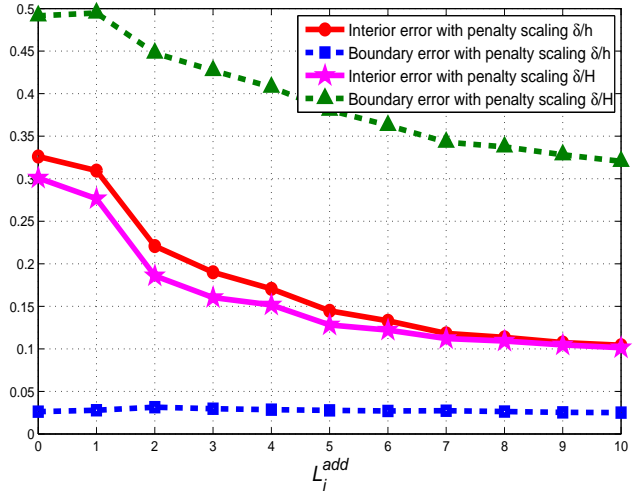}}
%{\includegraphics[width=7.6cm, height=5cm]{penalty5x20_4}}
\caption{The error for values of $L_i^{add}=0,1,\dots,10$  and  the 
two different penalty scalings: $\frac{\delta}{h}$ and $\frac{\delta}{H}.$ 
Here $h=1/100$, $H=1/10$ and  $\eta=10^4$. }
\label{penalty}
\end{figure}

\begin{figure}[htb]
{\includegraphics[width=7.6cm, height=4.5cm]{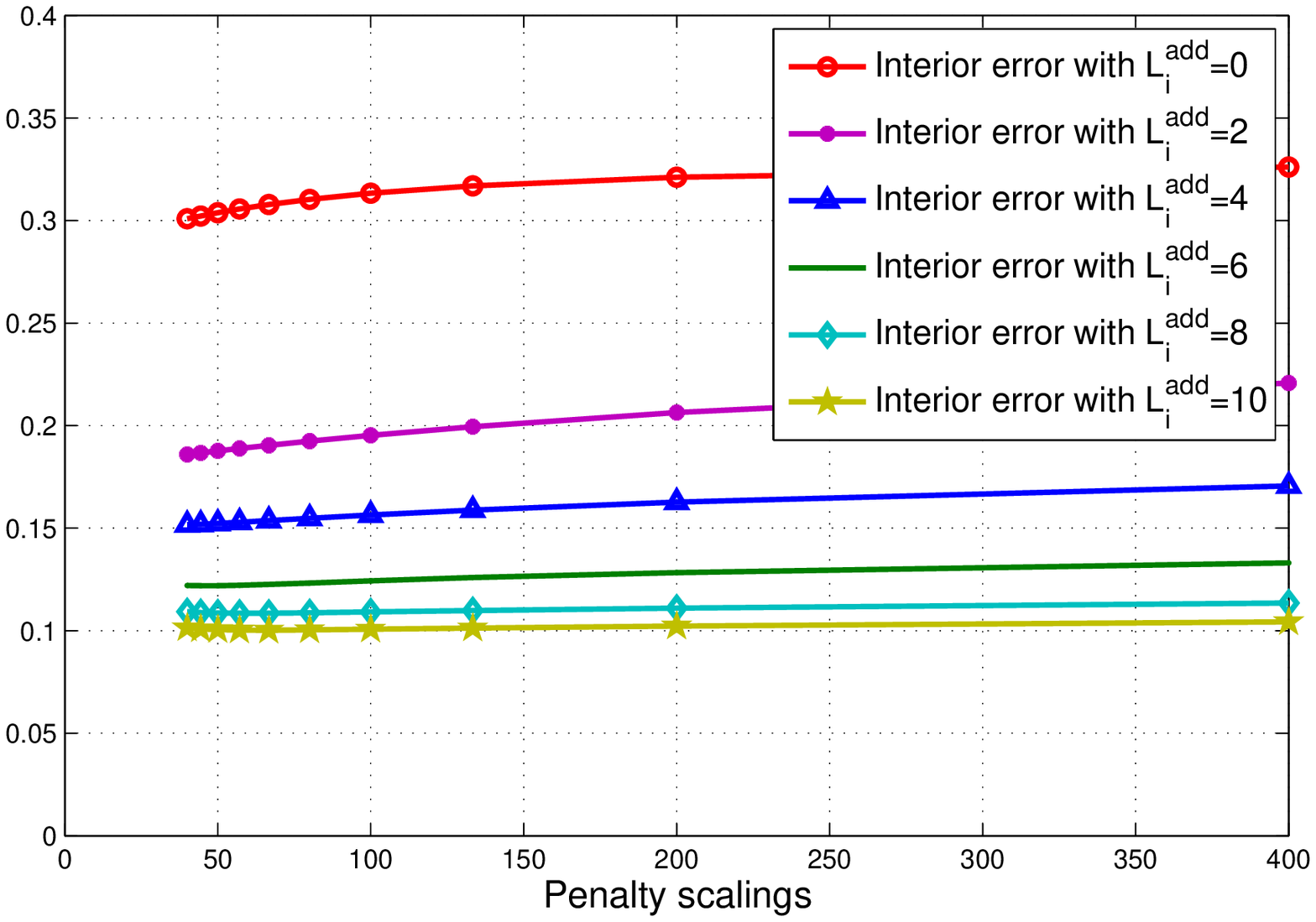}}
{\includegraphics[width=7.6cm, height=4.5cm]{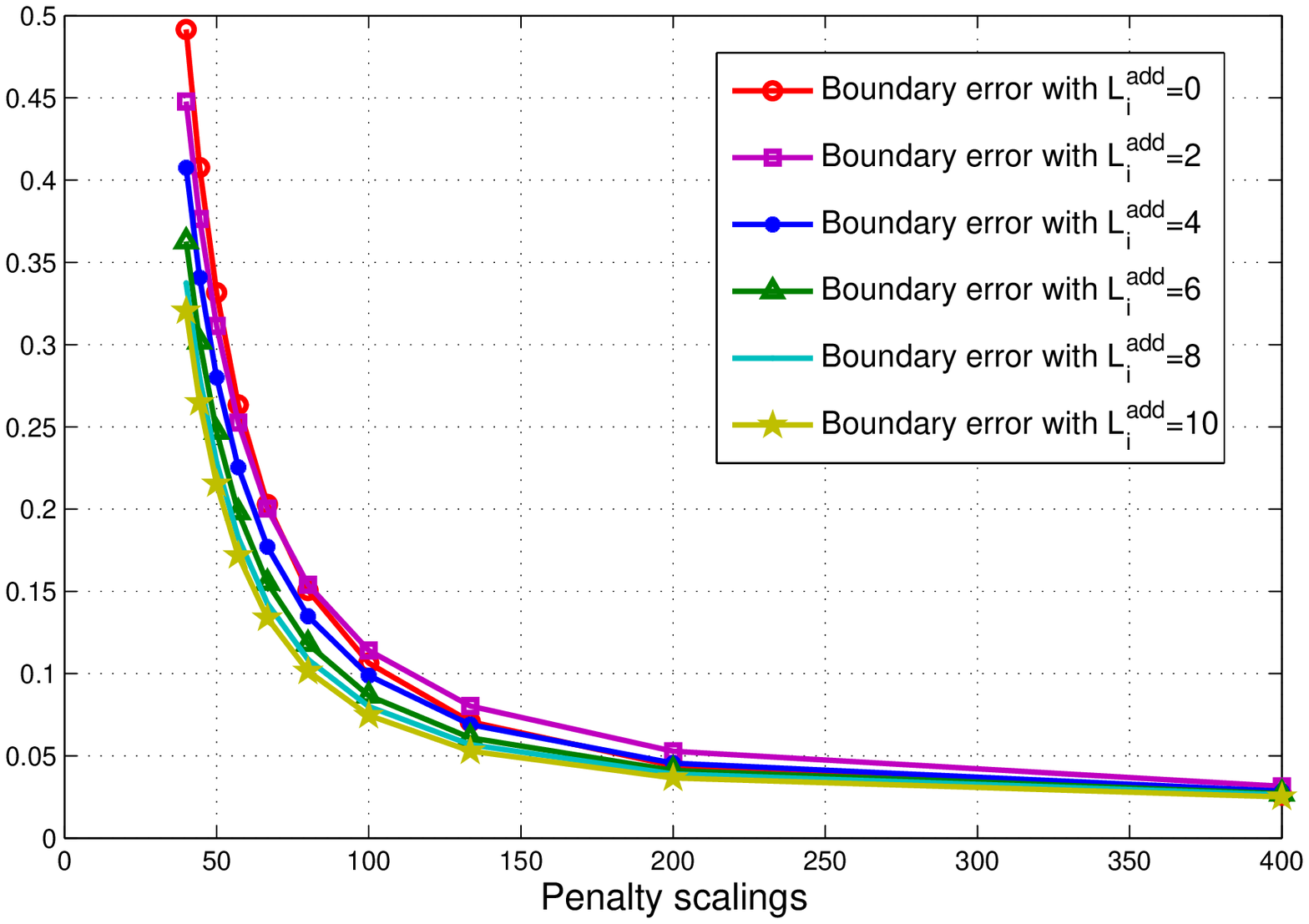}}\\
\caption{The error for Method I (in Section \ref{sec:msI} vs penalty scaling
for different values of $L_i^{add}$ . Here $h=1/100,$ $H=1/10$ and $\eta=10^4$. 
The interior error (left) and the boundary error (right).}
\label{error}
\end{figure}

For this experiment we set  the  contrast 
$\eta=10^4$, $M=10$, $n_i=10$, $i=1,\dots,N$  (and thus $H = 1/10$ and $h = 1/100$). 
Then,  recalling that $\delta=4$ for the numerical experiments, we have $\delta\frac{1}{H}=40$ and 
$\delta\frac{1}{h_{ij}}=400$.
For these two choices of the penalty in Figure \ref{penalty} we show the decay 
of the  interior and interface error when adding more eigenfunctions. We observe a reduction of the error as we use more and more additional coarse-grid  
basis functions. Also, we observe that the interior error are of comparable size (with either scaling) 
and that the interface error is bigger if we use the coarse penalty scaling.

We fix the number of additional eigenvectors $L_i^{add}$ and compute the interior and boundary errors when the coarse solution 
is computed with different penalties. We note that the fine-scale solution is computed with the fine-grid scaling of the penalty bilinear form.
We repeat this experiments with $L_i^{add}=0,1,\dots,10$. Results are shown on Figure \ref{error}.
From these results we observe that the boundary error for the coarse-grid solution
is more sensitive to the variations in the scaling of the penalty.
Indeed,
for example $M=10$, with $L_i^{add}=4$, $i=1,\dots,N$, the optimal penalty coefficient
is approximately  $70$.  These set of experiments demonstrates that one needs to properly choose
the penalty parameter in order to balance boundary and interior
errors.

\subsection{Fine-grid snapshot space and amended eigenvalue problem}
Here we consider the method introduced in Subsection \ref{sec:msII}. 
We repeat the experiment described in Subsection \ref{sec:fg1}. 
\begin{table}[ht]\footnotesize
\caption{Numerical results for Method II (Subsection \ref{sec:msII}) with 
 increasing dimension of the coarse space, $h=1/100$, 
$H=1/10$,  and  $\eta = 10^4$.} % title of Table
\centering % used for centering table
\begin{tabular}{r r r r r r r r}
$L_i^{add}$ & Dim & Interface& Interior & Total& Energy&$\lambda_{\min}$ \\ [1ex]
%heading     0.0304    0.0324    0.0322    0.0256    0.0240    0.0217    0.0210    0.0198    0.0188    0.0186
\hline
0& 100		&0.0318	& 0.2854	& 0.3172	&  0.3172& 0.0528\\
2&  300		&0.0342	& 0.1679	& 0.2020	& 0.2646& 0.0933\\
4&  500		&0.0257	& 0.1066	& 0.1323	& 0.1793 &0.1165\\
6&  700		&0.0214	& 0.0800	& 0.1014	& 0.1444 & 0.2459\\
8&  900		&0.0194	& 0.0581	& 0.0775	& 0.1164&0.3514\\
10& 1100  	&0.0185	& 0.0566	& 0.0751& 0.1132 & 0.4551\\[1ex] % [1ex] adds vertical space
\hline %inserts single line
\end{tabular}
\label{table:errorforlambda_amended} % is used to refer this table in the text
\end{table}
The results are displayed in Table \ref{table:errorforlambda_amended} for 
contrast $\eta=10^4$. Similar results where observed for higher contrast. We observe error reduction when  
next eigenfunctions are added. Note that the results obtained by using the amended eigenvalue 
problem in Subsection \ref{sec:msII} are slightly better. In this case our numerical results 
verify our theoretical error estimates in Theorem \ref{thm:error}.  Note that we report the energy error.

\subsection{Local solutions as snapshot space}
Next, we consider the snapshot space given as in Subsection 
\ref{sec:msIII} that consists of 
$a_i-$harmonic functions defined in (\ref{eq:eig:prob1}).
We note that this space of snapshots are used in the generalized
multiscale finite element method for wave equation in \cite{eric-2012}.
The objective of presenting these results is to show that our
proposed DG method is flexible and one can use various snapshot
spaces. 
\begin{table}[ht]\footnotesize
\caption{Numerical results for Method III (see Subsection \ref{sec:msIII}) 
with increasing dimension of the coarse space, $h=1/100$, $H=1/10$ and $\eta = 10^4$.} % and $\eta=10^6$} 
% title of Table
\centering % used for centering table
\begin{tabular}{r r r r r r r r}
\hline
$L_i^{add}$ & Dim & Interface & Interior& Total& Energy&$\lambda_{\min}$ \\ [1ex]
%heading
\hline
0& 100		&0.032	&0.285	& 0.317       &0.373&  0.0528\\
%1&  200(200)		&0.031(0.031)	& 0.249(0.249)	& 0.280(0.281)	&0.334(0.335)&  6.756(6.756)\\
2&  300		&0.034	& 0.168	& 0.202	&0.265&  0.0933\\
%3&  400(400)		&0.033(0.034)	& 0.134(0.133)	& 0.166(0.167)	&0.232(0.232)& 11.124(11.121)\\
4&  500		&0.026	& 0.107	& 0.133	&0.181&0.1165\\
%5&  600(600)		&0.023(0.024)	& 0.093(0.092)	& 0.117(0.116)	&0.160(0.159)& 23.896(23.896)\\
6&  700		&0.021	& 0.079	& 0.101	&0.144& 0.2459\\
%7&  800(800)		&0.021(0.021)	& 0.059(0.058)	& 0.080(0.079)	&0.123(0.122)& 31.751(31.749)\\
8&  900		&0.019	& 0.058	& 0.077	&0.116& 0.3514\\
%9&  1000(1000)	&0.018(0.019)	& 0.055(0.054)	& 0.074(0.073)	&0.111(0.110)& 43.888(43.888)\\
10 & 1100     &0.018	& 0.057	& 0.075	&0.114& 0.4552\\[1ex] % [1ex] adds vertical space
\hline %inserts single line
\end{tabular}
\label{table:errormsIII} % is used to refer this table in the text
\end{table}

In  Table \ref{table:errormsIII}, we present the numerical results with contrast $\eta=10^4$. Note, that
we have obtained similar results for  contrast $\eta=10^6$ (not reported here).
In this example, we choose $H=1/10$ and the same setup as in the  previous section. 
From these results we observe convergence to the fine-grid solution of (\ref{eq:disc}). We also 
observe that the 
error is inversely proportional to the $\lambda_{min}$ (the minimum left out eigenvalue). Note that, we are reporting the error not 
with respect to the reference solution (on the snapshot space) but with respect to the fine-grid solution. Nevertheless, 
we observe good results.

We recall that we need to use the bilinear form $m^\delta_i$ in the eigenvalue problem in order to obtain error estimates. 
We also observe convergence in the numerical tests if we use $m_i$ bilinear form (instead of $m^\delta_i$) in the snapshot 
space of harmonic functions.  These  results are reported  in Table \ref{table:example2_10^4_1}.

\begin{table}[h]\footnotesize
\caption{Numerical results for snapshot space and bilinear form $m_i$ (instead of $m^\delta_i$). Here $h=1/100$, $H=1/10$, and   $\eta=10^4$.} 
\centering 
\begin{tabular}{r r r r r }
%\hline   %$H=1/10 $ &&\\[1ex]
\hline   %$H=\frac{1}{10}$ & 
$L_i^{add}$ & Dim. & Interface & Interior & Total \\\hline
  2 & 300&   	0.0301  &	0.1750 &  0.2051\\
  4 & 500&  	0.0278  &	0.1104 &  0.1382\\
  6 & 700&  	0.0257  &	0.0905 &  0.1162\\
  8 &  900&  	0.0243  &	0.0763 &  0.1007\\
 10&1100&    0.0223 &      0.0663 &  0.0986\\ [1ex] % [1ex] adds vertical space
\hline %inserts single line
\end{tabular}
\label{table:example2_10^4_1} % is used to refer this table in the text
\end{table}

\section{Discussions on the convergence }\label{sec:discuss}

In this section, we discuss the convergence of the proposed 
discontinuous multiscale finite element method. 
For the multiscale method in Section \ref{sec:msII},  we derived error estimates 
 in Theorem \ref{thm:error}. Similar result holds for the 
coarse space described in Section \ref{sec:msIII}. 
Despite of the fact we do not write an error estimate for the multiscale space in 
Section \ref{sec:msI}, which uses simple weighted eigenvalue problems, we verified convergence 
in the numerical experiments. 
Moreover,  we observe that the interface error due to penalty term
is smaller than the interior error calculated with the energy norm
(see Table \ref{lambdaminplot}). This indicates
that once we have sufficient number of multiscale basis functions per coarse
region (that are selected properly with our spectral problem) the interface error 
is dominated by the interior error.
Consequently, one can argue that the local basis function construction
should mostly take into account the approximation property within 
coarse regions the idea that we follow in this paper.
The proposed eigenvalue problem attempts eliminating some interior 
degrees of freedom via an (interior and a boundary) mass matrix as  
discussed in our previous papers, e.g. \cite{Review,egw10}. 
By selecting mass matrix carefully in
the local spectral problem, we represent piecewise constant functions
within high-conductivity inclusions as the exact solution becomes almost 
constant within these regions.
Note that without the $\kappa$ (or $\kappa_{ij}$) weight in the mass bilinear form, we will be selecting
large number of important modes 
(all fine-grid degrees of freedom within high-conductivity regions according to previous studies).

%Because the error due to penalty is small, the local spectral
%problem needs to be selected based on best local approximation.

We have amended the local spectral problem  with a  mass term 
on the boundary (see Section \ref{sec:msII} and \ref{sec:msIII}) in 
order to obtain error estimates in terms of the 
local energy captured by the local space. 
A number of other similar eigenvalue problems that we tried, including adding some 
of the penalty terms in the eigenvalue problem, produced less satisfactory results.

As we observe from our numerical results that 
%does not need to choose
the penalty does not need to be very large. Even though the boundary error decreases, it is
much lower than the interior error for relatively small penalty terms.
Thus, if a small penalty term is preferred (e.g., for time-dependent problems),
then one can attempt to find an optimal penalty. For the steady
state problem considered in this paper, we do not investigate
this problem. 
We believe  that other appropriate techniques for
coupling discontinuous spectral basis functions should be also 
considered. For instance, formulations that avoid the use of
artificial penalty terms and use approaches such as hybridized discontinuous
Galerkin methods, \cite{DG_hybrid}, or mortar methods, \cite{Arbogast_PWY_07}. 
This is object of current research.

One can choose other multiscale spaces
such that to achieve higher accuracy with lower degrees of freedom.
For example, as we show that using proper eigenvalue problem one can
improve the error. Error to a reference solution can also be improved by selecting 
appropriate snapshot spaces. In general,  the choice of snapshot space
 depends on the input space as discussed in Introduction (see also \cite{egh12} and the local
solution space which this input space gives).
Here, we do not explore the choice of snapshot spaces since our goal is to show that 
Discontinuous Galerkin provides a nice framework to couple discontinuous
basis functions computed locally.

\section{Acknowledgements}

Y. Efendiev's work is
partially supported by the
DOE and NSF (DMS 0934837 and DMS 0811180).
J.Galvis would like to acknowledge partial support from DOE.
R. Lazarov's research was supported in parts by NSF (DMS-1016525).

This publication is based in part on work supported by Award 
No. KUS-C1-016-04, made by King Abdullah University of Science 
and Technology (KAUST).

We are grateful to Mr.\,Chak Shing Lee for implementing one of the methods and 
providing the results reported in Table
\ref{table:example2_10^4_1}.

%%%%%%%%%%%%%%%%%%%%%%%%%%%%%%%%%%%%%%%%%%%%%%%%%%%%%%%%%%%%%%%%%%%%%%%%%

%\bibliography{bib_raytcho,doe_2012}

%\end{document}

\bibliographystyle{plain}   
\bibliography{bib_raytcho,doe_2012}
\end{document}